\theoremstyle{plain}
\newtheorem{thm}{\protect\theoremname}
\theoremstyle{definition}
\newtheorem{defn}[thm]{\protect\definitionname}
\theoremstyle{plain}
\newtheorem{prop}[thm]{\protect\propositionname}
\theoremstyle{plain}
\newtheorem{lem}[thm]{\protect\lemmaname}
\theoremstyle{plain}
\newtheorem{cor}[thm]{\protect\corollaryname}
\theoremstyle{remark}
\newtheorem*{acknowledgement*}{\protect\acknowledgementname}
\providecommand{\acknowledgementname}{Acknowledgement}
\providecommand{\corollaryname}{Corollary}
\providecommand{\definitionname}{Definition}
\providecommand{\lemmaname}{Lemma}
\providecommand{\propositionname}{Proposition}
\providecommand{\theoremname}{Theorem}
\begin{document}
\title[Finiteness of the class group]{A uniform proof of the finiteness of the class group of a global field}
\author{Alexander Stasinski}
\address{Department of Mathematical Sciences, Durham University, Durham, DH1
3LE, UK}
\email{alexander.stasinski@durham.ac.uk}
\begin{abstract}
We give a definition of a class of Dedekind domains which includes
the rings of integers of global fields and give a proof that all rings
in this class have finite ideal class group. We also prove that this
class coincides with the class of rings of integers of global fields.
\end{abstract}

\maketitle

\section{Introduction.}

\subsection{Background.}

The starting point of algebraic number theory is to define an (algebraic)
\emph{number field} $K$ as a finite field extension of $\Q$ and
the ring $\Z_{K}$ of \emph{algebraic integers} in $K$ as all the
elements in $K$ that satisfy a monic polynomial equation with coefficients
in $\Z$ (This is called the \emph{integral closure} of $\Z$ in $K$.)
Unlike the situation in $\Z$, unique factorization of elements in
$\Z_{K}$ into irreducibles can fail. Nevertheless, $\Z_{K}$ is an
example of a \emph{Dedekind domain}, that is, an integral domain in
which every nonzero proper ideal is, uniquely, a product of prime
ideals.

One can develop a parallel theory of finite extensions $K$, called
(algebraic) \emph{function fields}, of the field $\F_{q}(t)$, which
is the field of fractions of the ring of polynomials $\F_{q}[t]$
over a finite field $\F_{q}$. The analogue of $\Z_{K}$ is then the
ring of elements in $K$ that satisfy a monic polynomial equation
with coefficients in $\F_{q}[t]$. These rings are also Dedekind domains
and their theory can to a large extent be developed in parallel with
that of the rings $\Z_{K}$. For this reason, it is sometimes convenient
to use the term \emph{global field} for either a number field or a
function field.

One of the most fundamental problems about rings of integers in global
fields is the study of the (failure of) unique factorization. This
is encoded in the \emph{(ideal) class group} of the ring, which is
trivial if and only if unique factorization holds. A important result
in algebraic number theory is that the class group of $\Z_{K}$ is
finite. Similarly, it is known that the class group of a ring of integers
in a function field is finite. One can define the ideal class group
$Cl(R)$ of any Dedekind domain $R$ as the equivalence classes of
nonzero ideals where two ideals $I,J$ are said to be equivalent if
$aI=bJ$, for some nonzero $a,b\in R$, and the group operation is
induced by multiplication of ideals.

\subsection{The main results.}

In \cite{Clark-MO-question}, P.~L.~Clark asked whether there exists
a ``purely algebraic'' proof of the finiteness of the class group
of global fields and whether there exist any ``structural'' conditions
on a Dedekind domain that imply the finiteness of its class group.

In this article we answer these questions in the affirmative. More
precisely, we introduce a class (G) of Dedekind domains (see Definition~\ref{def:Gen-num-ring})
that contains the rings of integers of global fields. We then give
a uniform (i.e., not case by case) proof that any ring of class (G)
has finite ideal class group. We also give a known argument showing
how to deduce the finiteness of the class group of any overring of
a ring of class (G). 

The proof of the finiteness of the class group that we give is essentially
that of R.~Swan \cite[Theorem~3.9]{Swan-book} and I.~Reiner \cite[(26.3)]{Reiner-max-ord}
(modulo some exercises) for rings of integers in global fields. The
contribution here is that we axiomatize properties of a Dedekind domain
sufficient for the proof to go through and that we show that these
properties in fact characterize rings of integers in global fields.
This shows in particular that the finiteness of the class groups of
global fields can be proved uniformly without adeles and without methods
from the geometry of numbers. This fact does not seem to have been
widely known. Indeed, Clark writes in \cite{Clark-MO-question} that
``[\ldots] it is generally held that the finiteness of the class
number is one of the first results of algebraic number theory which
is truly number-theoretic in nature and not part of the general study
of commutative rings'' and in \cite[B.1, p.~334]{Ramakrishnan-Valenza}
the authors write: ``Note well that for a general Dedekind domain,
$Cl_{K}$ need not be finite. This shows that one essentially needs
some analysis to supplement the abstract algebra in Chapter 5.'' 

A key idea of the proof is to estimate the norm of an element from
above algebraically using the fact that a determinant is a homogeneous
polynomial in the entries of a matrix (see the proof of Lemma~\ref{lem:Norm-polynomial}).
This idea is present in \cite[p.~53]{Swan-book}, \cite[(26.3)]{Reiner-max-ord}
and \cite[(20.10)]{Curtis_Reiner} but can be traced back to Zassenhaus
\cite{Zassenhaus-class-number} in the number field case and Higman--McLaughlin
\cite{Higman-McLaughlin} in the function field case. By contrast,
the standard nonadelic and nongeometric proof of the finiteness of
the class group in the number field case (see, e.g., \cite[V, Section~4]{Lorenzini})
expresses the field norm in terms of the complex absolute values of
Galois conjugates, and in the function field case this needs a  modification
involving absolute values. 

In the final section, we show that the class (G) coincides with the
class of global fields. This uses the Artin--Whaples axiomatization
of global fields and shows that the quasi-triangle inequality condition
in Definition~\ref{def:Gen-num-ring}, despite its simplicity and
elementary nature, implies the product formula for absolute values.

\section{Basic PIDs and rings of class (G).}

All rings are commutative with identity. Let $\N$ denote the set
of positive integers. We use the standard acronym PID for ``principal
ideal domain.'' A ring $R$ is called a \emph{finite quotient domain}
or is said to\emph{ have finite quotients} if for every nonzero ideal
$I$ of $R$, the quotient $R/I$ is a finite ring. If $R$ is a finite
quotient domain, $I\subseteq R$ a nonzero ideal, and $x\in R$ is
nonzero, we write $N_{R}(I)=|R/I|$ and $N_{R}(x)=|R/xR|$. We also
define $N_{R}(0)=0$. The function $N_{R}:R\rightarrow\N\cup\{0\}$
is called the \emph{ideal norm} on $R$. It is known that if $R$
is a finite quotient Dedekind domain, then $N_{R}$ is multiplicative
(see \cite[Lemma~V.3.5]{Lorenzini}).
\begin{defn}
\label{def:Basic PID}We call a PID $A$ a \emph{basic PID} if it
is not a field and if the following conditions are satisfied: 
\begin{enumerate}
\item $A$ is a finite quotient domain;
\item there exists a constant $c\in\N$ such that for each $m\in\N$,
\[
\#\{x\in A\mid N_{A}(x)\leq c\cdot m\}\geq m
\]
(i.e., $A$ has ``enough elements of small norm'');
\item there exists a constant $C\in\N$ such that for all $x,y\in A$,
\[
N_{A}(x+y)\leq C\cdot(N_{A}(x)+N_{A}(y))
\]
(i.e., $N_{A}$ satisfies the ``quasi-triangle inequality'').
\end{enumerate}
\end{defn}

There exist PIDs for which the first and second conditions in Definition~\ref{def:Basic PID}
hold but for which the third condition fails. Take, for instance,
the PID $A=\Z[\sqrt{2}]$. Then $u=1+\sqrt{2}$ is a unit in $A$
and $\bar{u}=1-\sqrt{2}$. For any $r\in\N$ write $u^{r}=a_{r}+b_{r}\sqrt{2}$,
for $a_{r},b_{r}\in\Z$. Then $a_{r}$ grows with $r$ and
\[
N_{A}(u^{r}+\bar{u}^{r})=N(2a_{r})=|N_{\Q(\sqrt{2})/\Q}(2a_{r})|=4a_{r}^{2},
\]
while $N_{A}(u^{r})+N_{A}(\bar{u}^{r})=2$, since $u^{r}$ and $\bar{u}^{r}$
are units. Thus the third condition in Definition~\ref{def:Basic PID}
fails for $A$ even though $A$ is a finite quotient domain that satisfies
the second condition since it has infinitely many units. 

Another example of a finite quotient PID $A$ where the second condition
holds but the third condition fails is the localization $\Z_{(p)}$
of $\Z$ at a prime $p$, that is, the subring of $\Q$ consisting
of fractions $a/b$, $a,b\in\Z$, where $p\nmid b$. Here $\pm1+p^{n}$
is a unit for every $n\in\N$, so $N_{A}(1+p^{n})+N_{A}(-1+p^{n})=1+1=2$,
while $N_{A}(1+p^{n}+(-1+p^{n}))=N_{A}(2)N_{A}(p)^{n}$, which grows
with $n$.
\begin{defn}
\label{def:Gen-num-ring}Let $A$ be a basic PID. We call a Dedekind
domain $B$ a\emph{ ring of class (G) (over $A$)} if $B$ is an $A$-algebra
that is finitely generated and free as a module over $A$.
\end{defn}

Since free modules over a PID are torsion-free, we may and will consider
$A$ as a subring of $B$ via the embedding $a\mapsto a\cdot1$. Our
goal in the next section is to prove that any ring $B$ of class (G)
has finite ideal class group. The terminology ``(G)'' is provisional
(``G'' for global), because it will turn out that the class (G)
is equal to the class of rings of integers in global fields (see Corollary~\ref{cor:class-(G)-is-global-rings}).
\begin{defn}
By \emph{global field} we mean either a finite extension of $\Q$
or a finite separable extension of some $\F_{q}(t)$, where $t$ is
transcendental over $\F_{q}$. By a \emph{ring of integers} of a global
field $K$ we mean either the integral closure in $K$ of $\Z$ (in
the number field case) or the integral closure in $K$ of $\F_{q}[t]$,
for some $t\in K$ transcendental over $\F_{q}$ (in the function
field case).
\end{defn}

Note that in the function field case, there is no unique ring of integers,
as, for instance, one can also take the integral closure of $\F_{q}[t^{-1}]$.
\begin{prop}
\label{prop:Global fields}Let $B$ be a ring of integers of a global
field. Then $B$ is a ring of class (G) over $\Z$ or $\F_{q}[t]$,
respectively.
\end{prop}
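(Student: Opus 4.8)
The plan is to verify directly the two requirements that make $B$ a ring of class (G): first, that the base ring $A$ (namely $\Z$ in the number field case and $\F_{q}[t]$ in the function field case) is a basic PID, and second, that $B$, which is by definition the integral closure of $A$ in the global field $K$ and hence an $A$-algebra containing $A$, is a Dedekind domain that is finitely generated and free as a module over $A$.

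First I would check that $\Z$ and $\F_{q}[t]$ satisfy the three axioms of Definition~\ref{def:Basic PID}. Both are PIDs and neither is a field. For the first axiom, one has $N_{\Z}(x)=|x|$ and $N_{\F_{q}[t]}(f)=q^{\deg f}$, so all nonzero quotients are finite. The second axiom holds with $c=1$ in each case: the integers $x$ with $|x|\le m$ number $2m+1\ge m$, and if $d$ is the largest integer with $q^{d}\le m$ then the $q^{d+1}>m$ polynomials of degree $\le d$ all have norm $\le m$. The third axiom also holds with $C=1$, since for $\Z$ it is the ordinary triangle inequality $|x+y|\le|x|+|y|$, and for $\F_{q}[t]$ it follows from $\deg(f+g)\le\max(\deg f,\deg g)$, which gives even the ultrametric bound $N(f+g)\le\max(N(f),N(g))$.

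The substantive part is to show that $B$ is module-finite and free over $A$ and that it is Dedekind. Write $F$ for the field of fractions of $A$, so $F=\Q$ or $F=\F_{q}(t)$, and recall that $K/F$ is finite and separable. I would first note that $K$ is the field of fractions of $B$: any $\alpha\in K$ becomes integral over $A$ after multiplication by a suitable nonzero element of $A$, hence lies in $B$ up to an $A$-denominator. The key step is finite generation of $B$ as an $A$-module, which I would establish by the classical trace-pairing argument. Choosing an $F$-basis $\omega_{1},\dots,\omega_{n}$ of $K$ lying in $B$, separability of $K/F$ makes the trace form $(x,y)\mapsto\mathrm{Tr}_{K/F}(xy)$ nondegenerate, so a dual basis $\omega_{1}^{*},\dots,\omega_{n}^{*}$ exists and a standard computation yields $B\subseteq A\omega_{1}^{*}+\dots+A\omega_{n}^{*}$. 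Since $A$ is Noetherian, every submodule of this free rank-$n$ module is finitely generated, so $B$ is a finitely generated $A$-module.

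Finally, $B$ is torsion-free over the domain $A$, so being finitely generated over the PID $A$ it is free; and $B$ is a Dedekind domain because it is integrally closed in its field of fractions $K$ by construction, Noetherian as a module-finite algebra over the Noetherian ring $A$, and of Krull dimension one since it is integral over the one-dimensional ring $A$. I expect the main obstacle to be the finite-generation step, where separability of $K/F$ is essential: it is exactly the hypothesis built into the definition of a global field (automatic in characteristic zero, assumed in the function field case) and is precisely what makes the trace form nondegenerate.
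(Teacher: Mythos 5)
Your verification that $\Z$ and $\F_{q}[t]$ are basic PIDs is correct and matches the paper's computation (both satisfy Definition~\ref{def:Basic PID} with $c=C=1$), and your derivation of the Dedekind property of $B$ from finite generation is fine. The gap is in the finite-generation step for function fields. You write ``recall that $K/F$ is finite and separable,'' but that is not what the paper's definitions give you. A global field $K$ in positive characteristic is a finite separable extension of \emph{some} $\F_{q}(t_{0})$, whereas a ring of integers of $K$ is the integral closure of $\F_{q}[t]$ for an \emph{arbitrary} $t\in K$ transcendental over $\F_{q}$; nothing forces this $t$ to be one for which $K/\F_{q}(t)$ is separable. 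For example, $K=\F_{p}(s)$ is a global field, and taking $t=s^{p}$ gives the ring of integers $B=\F_{p}[s]$, the integral closure of $\F_{p}[s^{p}]$ in $K$; here $K/\F_{p}(t)$ is purely inseparable of degree $p$. In that situation the trace form $\mathrm{Tr}_{K/F}$ is identically zero, so your dual-basis argument collapses: there is no dual basis and no inclusion $B\subseteq A\omega_{1}^{*}+\dots+A\omega_{n}^{*}$ to speak of. The conclusion is still true, but it needs a different proof. This is precisely the point the paper flags: its proof cites one reference for the number field case (where the classical trace argument works) and a separate one for the function field case, remarking that ``for function fields, where separability may fail, it requires a separate proof.''

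To close the gap you would either cite such a result directly, or use perfectness of $\F_{q}$: choose $m$ so that $K$ lies in a finite separable extension of $\F_{q}(t^{1/p^{m}})$ (possible since $\F_{q}^{1/p}=\F_{q}$, so $\F_{q}(t)^{1/p^{m}}=\F_{q}(t^{1/p^{m}})$), run the trace argument over $\F_{q}[t^{1/p^{m}}]$, and note that $\F_{q}[t^{1/p^{m}}]$ is itself free of finite rank $p^{m}$ over $\F_{q}[t]$, so $B$ sits inside a finitely generated module over the Noetherian ring $\F_{q}[t]$ and is therefore finitely generated. As written, your argument covers number fields and only those function-field rings of integers arising from a separating choice of $t$.
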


\begin{proof}
First, it is straightforward to check that $\Z$ is a basic PID. Indeed,
all its proper quotients $\text{\Z}/n$ are finite, $N_{\Z}(n)=|n|$
(the absolute value of $n$) so $\#\{x\in\Z\mid N_{\Z}(x)\leq m\}=2m+1\geq m$
and $N_{\Z}(x+y)=|x+y|\leq|x|+|y|=N_{\Z}(x)+N_{\Z}(y)$, thanks to
the usual triangle inequality. 

Next, let $A=\F_{q}[t]$. For any $f(t)\in A$ we have $N_{A}(f(t))=q^{\deg(f)}$,
so $A$ is a finite quotient domain and 
\begin{align*}
\#\{x\in A\mid N_{A}(x)\leq m\} & =\#\{x\in A\mid\deg(x)\leq\floor{\log_{q}(m)}\}\\
 & =q^{\floor{\log_{q}(m)}+1}>q^{\log_{q}(m)}=m,
\end{align*}
so the second property in Definition~\ref{def:Basic PID} is satisfied
for $A$. Furthermore, for $f(t),g(t)\in A$ we have
\begin{align*}
N_{A}(f(t)+g(t)) & =q^{\deg(f+g)}=q^{\max\{\deg f,\deg g\}}\leq q^{\deg f}+q^{\deg g}\\
 & =N_{A}(f(t))+N_{A}(g(t)),
\end{align*}
so $A$ is a basic PID. Thus both $\Z$ and $\F_{q}[t]$ satisfy Definition~\ref{def:Basic PID}
with $c=C=1$.

It is well known that $B$ is a Dedekind domain and is free of finite
rank over $\Z$ or $\F_{q}[t]$, respectively (see \cite[Theorem~I.4.7]{Lorenzini}
for the number field case and \cite[Theorem~X.1.7]{Lorenzini} for
the function field case; note that if the extension of fraction fields
is finite and separable, which is always the case for number fields,
this follows with a classical proof, but for function fields, where
separability may fail, it requires a separate proof.). Thus, in either
case, $B$ is a ring of class (G).
\end{proof}

\section{Norm estimates and finiteness.}

Throughout this section, let $B$ be a ring of class (G) over the
basic PID $A$ and let $K$ and $L$ be the field of fractions of
$A$ and $B$, respectively.

For $\alpha\in L$ we let $T_{\alpha}$ denote the endomorphism $L\rightarrow L$,
$x\mapsto\alpha x$, and define the norm $N_{L/K}(\alpha)=\det(T_{\alpha})$.
As is well known, the fact that $B$ is the integral closure of $A$
in $L$ implies that $N_{L/K}(B)\subseteq A$ (see, e.g., \cite[Corollary~IV.2.4]{Lorenzini}).

The following lemma is a consequence of \cite[Proposition IV.6.9 and Proposition~V.3.6]{Lorenzini}
(which is valid when $A$ is a Dedekind domain, not necessarily a
PID). We give a simple proof in our setting (where $A$ is a PID),
exploiting the Smith normal form (see, e.g., \cite[Section~5.3]{Adkins-Weintraub}).
\begin{lem}
\label{lem:N(alphaB)-N(N(alpha)A)}For any nonzero $\alpha\in B$,
we have $N_{B}(\alpha)=N_{A}(N_{L/K}(\alpha))$.
\end{lem}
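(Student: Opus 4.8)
The plan is to realize both sides of the identity through a single $A$-linear endomorphism and to compare them using the Smith normal form. Since $\alpha\in B$, multiplication by $\alpha$ restricts to an $A$-linear map $T_{\alpha}\colon B\to B$ on the free $A$-module $B$; because $B$ is a domain and $\alpha\neq 0$, this map is injective. Fixing an $A$-basis $e_{1},\dots,e_{n}$ of $B$, I first note that this basis is simultaneously a $K$-basis of $L$. Indeed, any element of $L=\operatorname{Frac}(B)$ has the form $b/s$ with $b,s\in B$, $s\neq 0$, and by Cayley--Hamilton applied to the integral element $s$ one has $N_{L/K}(s)\in sB$ with $N_{L/K}(s)\in A\setminus\{0\}$; hence $1/s\in K\cdot B$, so $B$ spans $L$ over $K$, and $A$-linear independence of the $e_{i}$ forces $K$-linear independence. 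Consequently the matrix of $T_{\alpha}$ on $B$ with respect to $e_{1},\dots,e_{n}$ coincides with the matrix of $T_{\alpha}$ on $L$ with respect to the same basis, so $\det(T_{\alpha})=N_{L/K}(\alpha)$ regardless of whether we compute over $A$ or over $K$.

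Next I would invoke the Smith normal form, available because $A$ is a PID. It provides $A$-bases of the source and target copies of $B$ in which $T_{\alpha}$ is represented by a diagonal matrix $\operatorname{diag}(d_{1},\dots,d_{n})$ with $d_{i}\in A$; injectivity of $T_{\alpha}$ and the fact that $A$ is a domain force every $d_{i}$ to be nonzero. The change-of-basis matrices lie in $\mathrm{GL}_{n}(A)$, so their determinants are units, whence $N_{L/K}(\alpha)=\det(T_{\alpha})=u\,d_{1}\cdots d_{n}$ for some $u\in A^{\times}$. Since $N_{A}(x)=|A/xA|$ depends only on the ideal $xA$, the unit is harmless and $N_{A}(N_{L/K}(\alpha))=N_{A}(d_{1}\cdots d_{n})$. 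The diagonal form also yields an isomorphism of $A$-modules $B/\alpha B=\operatorname{coker}(T_{\alpha})\cong\bigoplus_{i=1}^{n}A/d_{i}A$, each summand being finite because $A$ is a finite quotient domain and $d_{i}\neq 0$, so that $N_{B}(\alpha)=|B/\alpha B|=\prod_{i=1}^{n}N_{A}(d_{i})$. Finally, the multiplicativity of $N_{A}$ (valid since $A$ is a finite quotient Dedekind domain) gives $N_{A}(d_{1}\cdots d_{n})=\prod_{i=1}^{n}N_{A}(d_{i})$, and combining the two computations yields $N_{B}(\alpha)=N_{A}(N_{L/K}(\alpha))$.

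I expect the only genuinely delicate point to be the identification $\det(T_{\alpha})=N_{L/K}(\alpha)$, which rests on verifying that an $A$-basis of $B$ is still a $K$-basis of $L$; once this is secured, the Smith normal form reduces everything to the diagonal entries $d_{i}$, and the cokernel computation together with the multiplicativity of $N_{A}$ makes the remainder routine.
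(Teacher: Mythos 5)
Your proposal is correct and follows essentially the same route as the paper: both proofs apply the Smith normal form over the PID $A$ to the map $T_{\alpha}$, observe that $\det(T_{\alpha})$ equals the product of the diagonal entries up to a unit, compute the cokernel $B/\alpha B\cong\bigoplus_{i}A/d_{i}A$, and conclude by multiplicativity of $N_{A}$. The only differences are cosmetic: you make explicit the (correct) verification that an $A$-basis of $B$ is a $K$-basis of $L$, which the paper leaves implicit, and you cite the known multiplicativity of $N_{A}$ where the paper rederives it from the Chinese remainder theorem.
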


\begin{proof}
We have $N_{B}(\alpha)=|B/\alpha B|$ and $B/\alpha B$ is the cokernel
of the map $T_{\alpha}:B\rightarrow B$. By the Smith normal form,
we have
\[
B/\alpha B\cong A/p_{1}A\oplus\dots\oplus A/p_{n}A,
\]
where $n$ is the rank of $B$ over $A$, and $p_{i}\in A$ are some
nonunits such that $\det(T_{\alpha})=u^{-1}p_{1}\cdots p_{n}$, for
some unit $u\in A$ ($u=\det(PQ)$ where $PT_{\alpha}Q$ is the Smith
normal form, with $T_{\alpha}$ identified with its matrix with respect
to some chosen basis).

Now observe that for any $m_{1},\dots,m_{k}\in A$, we have 
\[
|A/m_{1}\cdots m_{k}A|=|A/m_{1}A|\cdots|A/m_{k}A|,
\]
which follows from the Chinese remainder theorem (see, e.g., \cite[Corollary~2.25]{Adkins-Weintraub}),
combined with the fact that for any irreducible element $m\in A$
and $i\in\N$, we have $|m^{i}A/m^{i+1}A|=|A/mA|$ (the map $A\to m^{i}A$
given by $1\mapsto m^{i}$ induces an isomorphism $A/mA\rightarrow m^{i}A/m^{i+1}A$).

Thus 
\begin{align*}
N_{A}(N_{L/K}(\alpha)) & =|A/N_{L/K}(\alpha)A|=|A/\det(T_{\alpha})A|=|A/p_{1}A|\cdots|A/p_{n}A|\\
 & =|B/\alpha B|=N_{B}(\alpha).
\end{align*}
\end{proof}
\begin{lem}
\label{lem:Norm-polynomial}Let $x_{1},\dots,x_{n}$ be a basis for
$B$ over $A$. Let $\alpha\in B$ and write $\alpha=c_{1}x_{1}+\dots+c_{n}x_{n}$,
with $c_{i}\in A$. Then there exists a homogeneous polynomial $f(T_{1},\dots,T_{n})$
over $A$ of degree $n$ such that
\[
N_{L/K}(\alpha)=f(c_{1},\dots,c_{n}).
\]
Moreover, there exists a constant $C\in\N$ such that
\[
N_{B}(\alpha)\leq C\cdot\max_{i}\{N_{A}(c_{i})\}^{n}.
\]
\end{lem}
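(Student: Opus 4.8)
The plan is to establish the two assertions in turn, producing the homogeneous polynomial first and then deriving the norm estimate from it.

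For the first part I would start from the identity $T_\alpha = \sum_{i=1}^n c_i T_{x_i}$, valid because $\alpha \mapsto T_\alpha$ is $K$-linear. The key observation is that each $T_{x_i}$ has a matrix $M_i$ with entries in $A$ relative to the basis $x_1,\dots,x_n$: writing $x_i x_j = \sum_k a_{ijk} x_k$ with $a_{ijk}\in A$ (possible since $B$ is closed under multiplication and the $x_k$ form an $A$-basis) exhibits these entries. Consequently the matrix of $T_\alpha$ is $\sum_i c_i M_i$, whose $(k,j)$-entry is the linear form $\sum_i a_{ijk} c_i$ in the $c_i$ with coefficients in $A$. Since the determinant of an $n\times n$ matrix is a homogeneous polynomial of degree $n$ in its entries (an alternating sum of products of $n$ entries, one from each row), substituting these linear forms, with the $c_i$ replaced by indeterminates $T_i$, produces a homogeneous polynomial $f(T_1,\dots,T_n)$ of degree $n$ over $A$ satisfying $f(c_1,\dots,c_n) = \det(T_\alpha) = N_{L/K}(\alpha)$.

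For the estimate, the case $\alpha = 0$ is trivial, so assume $\alpha \neq 0$. By Lemma~\ref{lem:N(alphaB)-N(N(alpha)A)} we have $N_B(\alpha) = N_A(N_{L/K}(\alpha)) = N_A(f(c_1,\dots,c_n))$, which reduces the problem to bounding $N_A$ of a value of $f$. Expanding $f = \sum_{\mathbf{e}} a_{\mathbf{e}} T_1^{e_1}\cdots T_n^{e_n}$ over the finitely many multi-indices $\mathbf{e} = (e_1,\dots,e_n)$ with $e_1 + \dots + e_n = n$ (say $M$ of them), I would first iterate the quasi-triangle inequality: letting $C_0$ denote the constant in condition (3) of a basic PID, induction bounds $N_A$ of a sum of $M$ terms by $C_0^{M-1}$ times the sum of their norms. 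Then, using multiplicativity of $N_A$ (valid since $A$ is a finite quotient Dedekind domain), each monomial satisfies $N_A(a_{\mathbf{e}} c_1^{e_1}\cdots c_n^{e_n}) = N_A(a_{\mathbf{e}}) \prod_i N_A(c_i)^{e_i} \leq N_A(a_{\mathbf{e}})\max_i\{N_A(c_i)\}^n$, since $\sum_i e_i = n$. Summing and setting $C = C_0^{M-1}\sum_{\mathbf{e}} N_A(a_{\mathbf{e}})$ yields the claimed bound $N_B(\alpha) \leq C\cdot\max_i\{N_A(c_i)\}^n$.

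The individual steps are routine; the one point requiring care, and which I regard as the crux, is that the constant $C$ must be genuinely independent of $\alpha$. This is exactly what homogeneity buys us: the monomials of $f$, their number $M$, and the coefficient norms $N_A(a_{\mathbf{e}})$ depend only on $A$, $B$, and the chosen basis, while the exponents always sum to $n$, so every monomial contributes the same power $\max_i\{N_A(c_i)\}^n$. The quasi-triangle inequality is what allows the additive structure of $f$ to be absorbed into a single multiplicative constant, and multiplicativity of $N_A$ is what converts the field norm into powers of the coordinate norms.
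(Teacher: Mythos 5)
Your proof is correct and follows essentially the same route as the paper's: express the matrix of $T_{\alpha}$ via the structure constants of the basis so that its entries are $A$-linear forms in the $c_{i}$, deduce homogeneity of $\det$, then combine Lemma~\ref{lem:N(alphaB)-N(N(alpha)A)}, the (iterated) quasi-triangle inequality, and multiplicativity of $N_{A}$ to absorb everything into one constant. Your explicit induction giving the factor $C_{0}^{M-1}$ and the separate treatment of $\alpha=0$ are slightly more careful than the paper's write-up, but the argument is the same.
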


\begin{proof}
For $1\leq i,j,k\leq n$, let $r_{ij}^{(k)}\in A$ be such that 
\[
x_{i}x_{j}=\sum_{k=1}^{n}r_{ij}^{(k)}x_{k}.
\]
Then 
\begin{align*}
\alpha x_{i} & =c_{1}x_{i}x_{1}+\cdots+c_{n}x_{i}x_{n}=c_{1}\sum_{k=1}^{n}r_{i1}^{(k)}x_{k}+\cdots+c_{n}\sum_{k=1}^{n}r_{in}^{(k)}x_{k}\\
 & =\sum_{k=1}^{n}\Big(\sum_{j=1}^{n}c_{j}r_{ij}^{(k)}\Big)x_{k},
\end{align*}
so the matrix of $T_{\alpha}$ with respect to the basis $x_{1},\dots,x_{n}$
has $(i,k)$-entry equal to $\sum_{j=1}^{n}c_{j}r_{ij}^{(k)}$, for
$1\leq i,k\leq n$. Hence each entry of the matrix of $T_{\alpha}$
is a linear form in $c_{1},\dots,c_{n}$, and therefore $\det(T_{\alpha})=f(c_{1},\dots,c_{n})$
for some homogeneous polynomial $f$ of degree $n$.

Moreover, write $f(c_{1},\dots,c_{n})=a_{1}c_{1}^{n_{1,1}}\cdots c_{n}^{n_{1,n}}+\dots+a_{k}c_{1}^{n_{k,1}}\cdots c_{n}^{n_{k,n}}$,
where $a_{i}\in A$, $k,n_{i,j}\in\N$ and $\sum_{j=1}^{n}n_{i,j}=n$,
for every $i$. By Lemma~\ref{lem:N(alphaB)-N(N(alpha)A)} and the
quasi-triangle inequality for $N_{A}$, there exists a constant $C_{0}\in\N$
such that
\begin{align*}
N_{B}(\alpha) & =N_{A}(N_{L/K}(\alpha))=N_{A}(f(c_{1},\dots,c_{n}))\\
 & \leq C_{0}\big(N_{A}(a_{1})N_{A}(c_{1})^{n_{1,1}}\cdots N_{A}(c_{n})^{n_{1,n}}+\cdots\\
 & \quad+N_{A}(a_{k})N_{A}(c_{1})^{n_{k,1}}\cdots N_{A}(c_{n})^{n_{k,n}}\big)\\
 & \leq C_{0}k\cdot\max_{i}\{N_{A}(a_{i})\}(\max_{i}\{N_{A}(c_{i})\})^{n}.
\end{align*}
Hence the result follows by letting $C=C_{0}k\cdot\max_{i}\{N_{A}(a_{i})\}$.
\end{proof}
\begin{thm}
\label{thm:Bound-finiteness}Suppose that $B$ is a ring of class
(G) over $A$. Then there exists a constant $C\in\N$ such that for
any ideal $I$ in $B$, there exists a nonzero element $\alpha\in I$
such that
\[
N_{B}(\alpha)\leq C\cdot N_{B}(I).
\]
Hence the ideal class group of $B$ is finite.
\end{thm}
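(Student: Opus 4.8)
The plan is to establish the norm bound first (the existence of $\alpha \in I$ with $N_B(\alpha) \leq C \cdot N_B(I)$) and then to deduce finiteness of the class group from it by a standard counting argument. For the norm bound, I would work with the basis $x_1,\dots,x_n$ of $B$ over $A$ and let $C$ be the constant furnished by Lemma~\ref{lem:Norm-polynomial}. The idea is a pigeonhole argument in the style of the geometry-of-numbers proof, but carried out purely through the ideal norm. Set $m = N_B(I)$, so that $|B/I| = m$. I would consider elements of $B$ whose coordinates $c_i$ all lie in a suitably chosen finite set of ``small-norm'' elements of $A$, and count how many such elements there are.

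The key step is to choose the size of the coordinate box correctly. Using condition~(2) of Definition~\ref{def:Basic PID}, for a suitable integer parameter the set $S = \{a \in A \mid N_A(a) \leq c \cdot t\}$ has at least $t$ elements, where $c$ is the basic-PID constant. I would then look at the set of all $\alpha = c_1 x_1 + \dots + c_n x_n$ with each $c_i \in S$; this gives at least $t^n$ elements of $B$. Choosing $t$ so that $t^n > m = |B/I|$ forces, by pigeonhole, two distinct such elements $\alpha', \alpha''$ to be congruent modulo $I$. Their difference $\alpha = \alpha' - \alpha''$ is a nonzero element of $I$ whose coordinates are differences of elements of $S$, hence satisfy $N_A(c_i) \leq C' \cdot t$ for some constant coming from the quasi-triangle inequality applied to $N_A$. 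Concretely one takes $t = \lfloor m^{1/n} \rfloor + 1$ so that $t^n > m$ while $t$ stays comparable to $m^{1/n}$.

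Feeding these coordinate bounds into Lemma~\ref{lem:Norm-polynomial} then gives
\[
N_B(\alpha) \leq C_1 \cdot \max_i \{N_A(c_i)\}^n \leq C_1 \cdot (C' t)^n \leq C_2 \cdot t^n,
\]
and since $t^n$ is bounded by a constant multiple of $m = N_B(I)$ (because $t \leq m^{1/n} + 1$), this yields $N_B(\alpha) \leq C \cdot N_B(I)$ for an absolute constant $C$ independent of $I$. The main obstacle I anticipate is keeping the constants genuinely uniform: one must verify that the quasi-triangle inequality bound on the coordinate differences, the polynomial norm bound from Lemma~\ref{lem:Norm-polynomial}, and the relation between $t^n$ and $m$ all combine into a single constant that does not depend on the ideal $I$. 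This requires checking that the quasi-triangle inequality, although stated for two summands, extends with a controlled constant to the finite sums and differences appearing here.

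For the deduction of finiteness, I would use the standard fact that the norm bound implies every ideal class contains an ideal of bounded norm, and that there are only finitely many ideals of bounded norm. Given any ideal class, pick a representative ideal $J$; applying the bound to a suitable fractional representative produces an integral ideal $I'$ in the inverse class with $N_B(I') \leq C$. More precisely, for a class represented by $J$, choose $\alpha \in J$ with $N_B(\alpha) \leq C \cdot N_B(J)$; then the ideal $I' = \alpha J^{-1}$ is integral (since $\alpha \in J$ means $\alpha B \subseteq J$) and lies in the inverse class of $J$, with multiplicativity of $N_B$ giving $N_B(I') = N_B(\alpha)/N_B(J) \leq C$. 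Finally, since $B$ is a finite quotient domain, each positive integer value of the norm is attained by only finitely many ideals (an ideal $I'$ with $N_B(I') = k$ contains $kB$, and there are finitely many ideals containing the fixed ideal $kB$ because $B/kB$ is finite), so only finitely many ideal classes occur, proving $Cl(B)$ is finite.
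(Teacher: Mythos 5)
Your argument for the norm bound is, step for step, the paper's own proof: the same pigeonhole on sums $c_{1}x_{1}+\dots+c_{n}x_{n}$ with coordinates drawn from a small-norm set supplied by condition~(2) of Definition~\ref{def:Basic PID} (your $t$ is the paper's $m+1$, where $m^{n}\leq N_{B}(I)<(m+1)^{n}$), the same quasi-triangle bound on the coordinate differences, and the same application of Lemma~\ref{lem:Norm-polynomial}; your constants are indeed uniform in $I$, since $t^{n}\leq(N_{B}(I)^{1/n}+1)^{n}\leq2^{n}N_{B}(I)$. Your anticipated obstacle about extending the quasi-triangle inequality to many summands does not arise: in this proof only two-term differences occur, and the multi-term estimate is already contained in the proof of Lemma~\ref{lem:Norm-polynomial}. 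The deduction that the inverse of every ideal class contains an integral ideal of norm at most $C$ is also identical to the paper's.

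The genuine gap is in your parenthetical proof that only finitely many ideals of $B$ have norm below a given bound. You argue that an ideal $I'$ with $N_{B}(I')=k$ contains $kB$, and that $B/kB$ is finite. This works only in characteristic zero. A ring of class (G) may have characteristic $p$ --- take $A=\F_{q}[t]$, which is precisely the function-field case the theorem is meant to cover --- and then $B/I'$ is a vector space over $\F_{p}$, so $k=|B/I'|$ is a power of $p$; since $p\cdot1_{B}=0$, the ideal $kB$ is the zero ideal, $B/kB=B$ is infinite, and the argument yields nothing. The paper sidesteps this point by quoting the fact from \cite[Lemma~V.3.7]{Lorenzini}. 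A self-contained repair in the spirit of your proof: first show that the basic PID $A$ has only finitely many elements of norm at most $M$, for any $M$. Indeed, if $x_{1},x_{2},\dots$ were infinitely many such elements, then for an arbitrary nonzero nonunit $b\in A$ two of them would be congruent modulo $bA$ (as $A/bA$ is finite), so $b$ would divide a nonzero element of norm at most $2C_{0}M$, forcing $N_{A}(b)\leq2C_{0}M$ by multiplicativity of $N_{A}$; this is absurd, because for a fixed nonunit $b_{0}$ one has $N_{A}(b_{0}^{k})=N_{A}(b_{0})^{k}\rightarrow\infty$. Then, for a nonzero ideal $I'$ of $B$ with $N_{B}(I')\leq C$, the ideal $I'\cap A$ is nonzero (it contains the nonzero element $N_{L/K}(\beta)$ for any nonzero $\beta\in I'$, which lies in $\beta B$ by Cayley--Hamilton) and $A/(I'\cap A)$ embeds in $B/I'$, so $I'\cap A=jA$ with $N_{A}(j)\leq C$; there are finitely many such ideals $jA$, and for each of them only finitely many ideals $I'\supseteq jB$, because $B/jB$ is finite.
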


\begin{proof}
Let $x_{1},\dots,x_{n}$ be a basis for $B$ over $A$. Let $m$ be
the unique positive integer such that $m^{n}\leq N_{B}(I)<(m+1)^{n}$.
The fact that $A$ is a basic PID (the second property) says that
there exists a $c\in\N$ such that for every $m$, $\#\{x\in A\mid N_{A}(x)\leq cm\}\geq m$.
Thus, for every $m$, the set
\[
S_{m}:=\{x\in A\mid N_{A}(x)\leq2cm\}
\]
has at least $m+1$ elements. Hence the set 
\[
S_{m}x_{1}+\dots+S_{m}x_{n}
\]
has at least $(m+1)^{n}$ distinct elements. Since $(m+1)^{n}>|B/I|$,
there exist two distinct elements $s$ and $t$ in the set $S_{m}x_{1}+\dots+S_{m}x_{n}$
that are congruent mod $I$. Write $s=\sum_{i=1}^{n}a_{i}x_{i}$ and
$t=\sum_{i=1}^{n}b_{i}x_{i}$, with $a_{i},b_{i}\in S_{m}$. Then
\[
s-t=\sum_{i=1}^{n}(a_{i}-b_{i})x_{i}
\]
is a nonzero element of $I$ and by the third property of Definition~\ref{def:Basic PID},
there is a $C_{0}\in\N$ such that
\[
N_{A}(a_{i}-b_{i})\leq C_{0}(N_{A}(a_{i})+N_{A}(b_{i}))\leq C_{0}2\cdot2cm.
\]
Thus Lemma~\ref{lem:Norm-polynomial} implies that there is a $C_{1}\in\N$
such that 
\[
N_{B}(s-t)\leq C_{1}\cdot\max_{i}\{N_{A}(a_{i}-b_{i})\}^{n}\leq C_{1}(C_{0}4cm)^{n},
\]
and thus 
\[
\frac{N_{B}(s-t)}{N_{B}(I)}\leq\frac{C_{1}(C_{0}4cm)^{n}}{m^{n}}=C_{1}(C_{0}4c)^{n}.
\]
Taking $\alpha=s-t$ and $C=C_{1}(C_{0}4c)^{n}$ thus proves the first
assertion of the theorem.

A well-known argument now implies the finiteness of the class group
of $B$ (see, e.g., \cite[Lemmas~V.3.8--3.9]{Lorenzini}). We give
the argument here for the convenience of the reader. If $I$ is an
ideal of $B$, we write $[I]$ for the corresponding ideal class in
the class group $Cl(B)$. We will first show that any ideal class
$\mfc\in Cl(R)$ contains an ideal $I$ such that $N(I)\leq C$. Let
$J$ be an ideal of $B$ such that $\mfc=[J]$. 

By the first assertion of the theorem, there exists a nonzero $\alpha\in J$
and a $C\in\N$ such that $N(\alpha)\leq C\cdot N(J)$. Since $\alpha B\subseteq J$,
the unique factorization of ideals in $B$ implies that we have $\alpha B=IJ$,
for some ideal $I$. Since $[\alpha B]$ is the trivial ideal class,
\[
[I]=[J]^{-1}=\mfc^{-1},
\]
and by the multiplicativity of $N$,
\[
N(J)N(I)=N(\alpha)\leq C\cdot N(J),
\]
so $N(I)\leq C$. We have thus shown what we wanted for $\mfc^{-1}$.
But $\mfc\in Cl(B)$ was arbitrary, so it holds for all $\mfc$. Now,
since there are only finitely many ideals of norm below a given bound
(see, e.g., \cite[Lemma~V.3.7]{Lorenzini}) we conclude that there
can only be finitely many classes $\mfc\in Cl(B)$.
\end{proof}
The theorem above together with Proposition~\ref{prop:Global fields}
imply that rings of integers of global fields have finite ideal class
group.

Let $D$ be an integral domain with field of fractions $K$. A ring
$R$ such that $D\subseteq R\subseteq K$ is called an \emph{overring}
of $D$. The following is a known result.
\begin{lem}
Let $D$ be a Dedekind domain with finite class group. Then any overring
$R$ of $D$ is a Dedekind domain with finite class group.
\end{lem}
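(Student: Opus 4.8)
The plan is to prove the statement in two stages: first that $R$ is again a Dedekind domain, and then that extension of ideals induces a surjection $Cl(D)\twoheadrightarrow Cl(R)$, so that $Cl(R)$ is a quotient of the finite group $Cl(D)$ and is therefore finite. (If $R=K$ then $R$ is a field with trivial class group and there is nothing to prove, so I may assume $R\neq K$.)

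For the structural stage I would invoke the standard description of overrings of a Dedekind domain. Let $\Sigma$ be the set of nonzero primes $\mathfrak{p}$ of $D$ with $\mathfrak{p}R\neq R$. The claim is that $R=\bigcap_{\mathfrak{p}\in\Sigma}D_{\mathfrak{p}}$. For $\mathfrak{p}\in\Sigma$ one has $R\subseteq D_{\mathfrak{p}}$: indeed $\mathfrak{p}R$ then lies in a maximal ideal $\mathfrak{M}$ of $R$ contracting to $\mathfrak{p}$, and since the DVR $D_{\mathfrak{p}}$ has no proper overring inside $K$ this forces $R_{\mathfrak{M}}=D_{\mathfrak{p}}$, whence $R\subseteq D_{\mathfrak{p}}$. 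The reverse inclusion follows from $D=\bigcap_{\mathfrak{p}}D_{\mathfrak{p}}$ once one notes that the primes outside $\Sigma$ become the unit ideal in $R$ and so impose no condition. Since each $D_{\mathfrak{p}}$ is a DVR, $R$ is an intersection of valuation rings of $K$ and hence integrally closed in $K$; and because $D$ has finite character (every nonzero $x\in K$ is a unit in all but finitely many $D_{\mathfrak{p}}$), the intersection is a one-dimensional Krull domain, that is, a Dedekind domain. These facts are classical and could instead simply be cited.

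For the class-group stage, the key point is that every nonzero ideal $J$ of $R$ is extended from $D$, that is, $J=\mathfrak{a}R$ for some ideal $\mathfrak{a}$ of $D$. Granting this, the assignment $[I]\mapsto[IR]$ is a well-defined homomorphism $Cl(D)\to Cl(R)$: it respects products since $(I_{1}I_{2})R=(I_{1}R)(I_{2}R)$; it carries principal ideals to principal ideals and descends to classes, since $aI_{1}=bI_{2}$ in $D$ gives $a(I_{1}R)=b(I_{2}R)$ in $R$; and it is surjective because an arbitrary class $[J]$ equals $[\mathfrak{a}R]$ and so lies in its image. Thus $Cl(R)$ is a homomorphic image of $Cl(D)$ and is finite. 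The main obstacle is precisely the claim that every ideal of $R$ is extended. I would prove it by reducing to primes: writing the ideal $J$ of the Dedekind domain $R$ as a product of primes $\mathfrak{P}_{1}\cdots\mathfrak{P}_{k}$, one shows from the local description above that each $\mathfrak{P}_{i}$ contracts to a prime $\mathfrak{p}_{i}\in\Sigma$ with $\mathfrak{P}_{i}=\mathfrak{p}_{i}R$ and $R_{\mathfrak{P}_{i}}=D_{\mathfrak{p}_{i}}$, so that $J=(\mathfrak{p}_{1}\cdots\mathfrak{p}_{k})R$ is extended. Equivalently, one may invoke the flatness of the overring $R$ over the Pr\"{u}fer domain $D$, from which $J=(J\cap D)R$ follows formally; the work then shifts to justifying flatness, which again rests on the local description $R=\bigcap_{\mathfrak{p}\in\Sigma}D_{\mathfrak{p}}$.
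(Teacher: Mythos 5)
Your proof is correct, but it takes a genuinely different route from the paper's. The paper argues as follows: it cites Claborn for the fact that $R$ is Dedekind; then, since $Cl(D)$ is finite and hence \emph{torsion}, it invokes a theorem of Davis, Gilmer--Ohm, and Goldman to conclude that $R$ is a localization $S^{-1}D$ at a multiplicative set, and finally cites Claborn again for the fact that the class group of such a localization is a quotient of $Cl(D)$. Your argument replaces the localization theorem with the classical structure theorem for overrings of a Dedekind domain, $R=\bigcap_{\mathfrak{p}\in\Sigma}D_{\mathfrak{p}}$ with $\Sigma=\{\mathfrak{p}\mid\mathfrak{p}R\neq R\}$, from which you deduce both that $R$ is Dedekind and that every ideal of $R$ is extended, giving the surjection $Cl(D)\twoheadrightarrow Cl(R)$ directly. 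The two routes differ in where the hypothesis enters: the paper's use of torsion is essential to its method, since an overring of a Dedekind domain need not be a localization at a multiplicative set when the class group has elements of infinite order (this is exactly what the Davis/Gilmer--Ohm/Goldman theorem rules out in the torsion case), whereas your argument establishes the stronger, fully general fact that $Cl(R)$ is a quotient of $Cl(D)$ for \emph{any} overring of \emph{any} Dedekind domain, with finiteness invoked only in the last line. What the paper's approach buys is brevity by citation; what yours buys is a more self-contained and more general proof, at the cost of having to develop (or cite) the overring structure theory, and your two tersest steps --- the inclusion $\bigcap_{\mathfrak{p}\in\Sigma}D_{\mathfrak{p}}\subseteq R$ (via the denominator ideal of $x$, whose prime factors all lie outside $\Sigma$ and hence extend to the unit ideal) and the claim $\mathfrak{P}=(\mathfrak{P}\cap D)R$ for primes $\mathfrak{P}$ of $R$ (via comparison of localizations at all maximal ideals of $R$) --- do need those details filled in, though both are standard and complete exactly as you indicate.
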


\begin{proof}
It is well known that $R$ is a Dedekind domain (see, e.g., \cite[Lemma~1-1]{Claborn-65}).
Since the class group of $D$ is finite, hence torsion, a result independently
due to Davis \cite[Theorem~2]{Davis-64}, Gilmer and Ohm \cite[Cor.~2.6]{Gilmer-Ohm},
and Goldman \cite[§1, Corollary~(1)]{Goldman} implies that $R$ is
the localization of $D$ at a multiplicative subset of $D$. Then,
by a straightforward argument (see \cite[Proposition~1-2, Corollary~1-3]{Claborn-65}),
the class group of $R$ is a quotient of the class group of $D$,
hence is finite.
\end{proof}
The class of overrings of rings of class (G) includes all $S$-integer
rings, for any finite set $S$ of places containing the Archimedean
ones. On the other hand, by Theorem~\ref{thm:BasicPIDs-are-rings-of-integers}
it will follow that a ring of $S$-integers is not of type (G) unless
it is a ring of integers of a global field. 

\section{Rings of class (G) and global fields.}

For the reader's convenience, we state a few definitions and results
from Artin's book \cite[Chapter~1]{Artin-Alg_Num_and_Func}.
\begin{defn}
\label{def:An-absolute-value}An \emph{absolute value} (called ``valuation''
in \cite[Chapter~1]{Artin-Alg_Num_and_Func}) of a field $K$ is a
function $|\cdot|:K\to\R$, $x\mapsto|x|$, satisfying the following
conditions:
\begin{enumerate}
\item $|x|\geq0$ and $|x|=0$ if and only if $x=0$;
\item $|xy|=|x|\cdot|y|$;
\item there exists a constant $c\in\R$, $c\geq1$ such that if $|x|\leq1$,
then $|1+x|\leq c$.
\end{enumerate}
\end{defn}

Note that the third condition is equivalent to $|\cdot|$ satisfying
the quasi-triangle inequality (cf.~the third condition in Definition~\ref{def:Basic PID}).
Indeed, let $c$ be as in the third condition above and let $x,y\in K$.
If either $x=0$ or $y=0$, the quasi-triangle inequality is trivially
satisfied, so we may assume that $x\neq0$, $y\neq0$, and without
loss of generality $|x/y|\leq1$. Then $|1+x/y|\leq c\leq2c(1+|x/y|)$,
so $|x+y|\leq C(|x|+|y|)$, with $C=2c$. Conversely, if $C\in\R$
is a positive number such that $|x+y|\leq C(|x|+|y|)$ holds for all
$x,y\in K$, then in particular $|1+x|\leq C(1+|x|)$, and by making
$C$ larger if necessary, we can take $C\geq1$. Thus, if $|x|\leq1$,
we obtain $|1+x|\leq2C$, so the third condition in Definition~\ref{def:An-absolute-value}
holds with $c=2C$.

The trivial absolute value is the one for which $|x|=1$ for all nonzero
$x\in K$. One defines two absolute values $|\cdot|_{1}$ and $|\cdot|_{2}$
to be \emph{equivalent} if for any $x\in K$, $|a|_{1}<1$ if and
only if $|\cdot|_{2}<1$. It turns out that every absolute value is
equivalent to one for which the usual triangle inequality holds.

Let $K$ be a field and $|\cdot|_{v}$ and absolute value of $K$.
The absolute value $|\cdot|_{v}$ is said to be \emph{non-Archimedean}
if for all $x,y\in K$,
\[
|x+y|_{v}\leq\max\{|x|_{v},|y|_{v}\};
\]
otherwise $|\cdot|_{v}$ is said to be \emph{Archimedean}. We call
$|\cdot|_{v}$ \emph{discrete }if $|K|_{v}$ is a discrete subset
of $\R$. If $|\cdot|_{v}$ is non-Archimedean, $\mathcal{\cO}_{v}=\{x\in K\mid|x|_{v}\leq1\}$
is a ring (called the \emph{valuation ring} at $v$), $\mfp_{v}=\{x\in K\mid|x|_{v}<1\}$
is a maximal ideal of $\mathcal{\cO}_{v}$, and the field
\[
\bar{k}_{v}=\cO_{v}/\mfp_{v}
\]
is called the \emph{residue class field} at $v$. 

Let $\Sigma$ be a set of nonequivalent and nontrivial absolute values
of $K$. Consider the set $k_{0}=\{x\in K\mid|x|_{v}\leq1\text{ for all }v\in\Sigma\}$.
It is not hard to show that $k_{0}$ is a field if and only if $\Sigma$
contains no Archimedean prime (see \cite[Chapter~12, Section~1]{Artin-Alg_Num_and_Func}).
In this case we may consider $k_{0}$ as a subfield of each $\bar{k}_{v}$.

We will use the following fundamental result, due to Artin and Whaples
\cite{Artin-Whaples}.
\begin{thm}
\label{thm:Artin--Whaples}Suppose that $K$ is a field with a set
$\Sigma$ of mutually nonequivalent and nontrivial absolute values
such that the following two conditions hold:
\begin{enumerate}
\item For every $x\in K^{\times}$, $|x|_{v}=1$ for all but a finite number
of $v\in\Sigma$ and
\[
\prod_{v\in\Sigma}|x|_{v}=1;
\]
\item there is at least one $v\in\Sigma$ such that either $v$ is Archimedean
or $v$ is discrete and $\bar{k}_{v}$ is finite.
\end{enumerate}
Then $K$ is a global field. 
\end{thm}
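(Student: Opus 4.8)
The plan is to reconstruct a classical base global field $k_{0}$ sitting inside $K$ and then to prove that $K$ is a finite extension of it. First I would use condition (2) to fix the characteristic of $K$. If some $v\in\Sigma$ is Archimedean, then the completion of $K$ at $v$ contains $\R$, so $K$ has characteristic $0$, contains $\Q$, and I would set $k_{0}=\Q$. Suppose instead that no $v$ is Archimedean but some $v_{0}$ is discrete with finite residue field. I claim $K$ then has characteristic $p>0$: if $K$ had characteristic $0$, then every nonzero integer $n$ would satisfy $|n|_{v}\leq1$ for all $v$ (all $v$ being non-Archimedean), so the product formula would force $|n|_{v}=1$ for all $v$ and all such $n$; this makes the restriction of every $|\cdot|_{v}$ to $\Q$ trivial, whence $\Q$ would embed into the finite residue field $\bar{k}_{v_{0}}$, which is absurd. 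In characteristic $p$, $K$ cannot be algebraic over $\F_{p}$ (every absolute value on an algebraic extension of a finite field is trivial), so $K$ contains an element transcendental over $\F_{p}$, and I would take $k_{0}=\F_{p}(t)$ for a suitable such $t$. In both cases Ostrowski's theorem and its function-field analogue identify the restrictions to $k_{0}$ of the $v\in\Sigma$ with the standard absolute values of $k_{0}$, the product formula on $K$ restricts to the classical product formula on $k_{0}$, and the finiteness of $\bar{k}_{v_{0}}$ forces the field of constants of $K$ to be a finite field $\F_{q}$.

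The heart of the argument, and the step I expect to be the main obstacle, is to prove that $[K:k_{0}]<\infty$. For this I would use the Artin--Whaples approximation theorem together with a Riemann--Roch-type counting of ``parallelotopes.'' Concretely, for a family of bounds $\mathbf{c}=(c_{v})_{v\in\Sigma}$ with $c_{v}=1$ for all but finitely many $v$, consider
\[
L(\mathbf{c})=\{x\in K\mid|x|_{v}\leq c_{v}\text{ for all }v\in\Sigma\}.
\]
On one hand, the product formula together with the place singled out in condition (2) yields, as $\prod_{v}c_{v}$ grows, a lower bound on the size of $L(\mathbf{c})$ (its number of elements in the number-field case, or its dimension over $\F_{q}$ in the function-field case, where $L(\mathbf{c})$ is an $\F_{q}$-subspace). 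On the other hand, the same quantity can be bounded from above in terms of $[K:k_{0}]$ and the corresponding counting function for $k_{0}$, which is the classical and elementary count for $\Q$ or $\F_{q}(t)$. Comparing the two growth rates bounds $[K:k_{0}]$, since were the degree infinite the two estimates would be incompatible. This is exactly where genuine work is unavoidable, and it is the technical heart of the Artin--Whaples argument \cite{Artin-Whaples} (see also \cite[Chapter~12]{Artin-Alg_Num_and_Func}).

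Once $[K:k_{0}]<\infty$ is established, the conclusion is immediate. In characteristic $0$ we have $[K:\Q]<\infty$, so $K$ is a number field. In characteristic $p$, $K$ is a finite extension of $\F_{p}(t)$ with finite field of constants $\F_{q}$; hence $K$ is a finitely generated extension of $\F_{q}$ of transcendence degree $1$, and since $\F_{q}$ is perfect, $K$ admits a separating transcendence basis, exhibiting $K$ as a finite separable extension of $\F_{q}(t')$ for a suitable transcendental $t'$, i.e.\ a function field. In either case $K$ is a global field, as required.
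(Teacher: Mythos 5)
Your proposal is correct and takes essentially the same route as the paper: the paper does not prove this theorem at all but cites it as a result of Artin--Whaples (adding only a short remark that in the non-Archimedean case the constant field $k_{0}$ is finite because it embeds in a finite residue field), and your outline --- the characteristic dichotomy, the parallelotope-counting argument for $[K:k_{0}]<\infty$, and the reduction of the function-field case to a finite extension of $\F_{q}(t)$ --- is precisely the classical Artin--Whaples argument that both you and the paper ultimately defer to the same citation for. The one point where you go beyond the paper's remark, invoking a separating transcendence basis over the perfect field $\F_{q}$ to meet the paper's definition of a global function field as a finite \emph{separable} extension of $\F_{q}(t)$, is a legitimate and welcome addition rather than a deviation.
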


A comment on the proof of this theorem. The proof of \cite[Chapter~12, Theorem~3]{Artin-Alg_Num_and_Func}
shows that under the conditions of Theorem~\ref{thm:Artin--Whaples},
if $\Sigma$ has at least one Archimedean absolute value, then $K$
is a number field and otherwise $K$ is a finite extension of $k_{0}(t)$,
for some $t$ transcendental over $k_{0}$. In the latter case, $k_{0}$
is a subfield of the finite field $\bar{k}_{v}$, so $k_{0}$ itself
is finite and thus $K$ is a global function field. 

We now come to the main result of the present section.
\begin{thm}
\label{thm:BasicPIDs-are-rings-of-integers}Let $A$ be a finite quotient
PID such that its ideal norm $N_{A}$ satisfies the quasi-triangle
inequality. Then the field of fractions $K$ of $A$ is a global field
and $A$ is a ring of integers of $K$.
\end{thm}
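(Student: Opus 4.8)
The plan is to deduce that $K$ is a global field from the Artin--Whaples criterion (Theorem~\ref{thm:Artin--Whaples}), using $N_A$ itself to supply the missing place ``at infinity.'' Throughout I assume $A$ is not a field, which is necessary for the conclusion (a field has trivial ideal norm and is not a ring of integers). First I would extend $N_A$ to $K$ by $N_A(a/b)=N_A(a)/N_A(b)$; this is well defined and multiplicative because $N_A$ is multiplicative on the finite quotient Dedekind domain $A$, and the quasi-triangle inequality passes to $K$ by clearing denominators. By the remark after Definition~\ref{def:An-absolute-value}, the quasi-triangle inequality is exactly condition~(3), so $N_A$ is an absolute value on $K$, which I write as $|\cdot|_\infty$. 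For each nonzero prime $\mfp=(\pi)$ of the PID $A$ I take the $\mfp$-adic absolute value $|x|_\mfp=q_\mfp^{-v_\mfp(x)}$, where $q_\mfp=N_A(\pi)=|A/\mfp|$ is finite and $v_\mfp$ is the $\mfp$-adic valuation; this is non-Archimedean and discrete, with finite residue field $A/\mfp$, and it coincides with the canonical finite place of $K$ once $K$ is known to be global, since the residue fields agree. Set $\Sigma=\{|\cdot|_\infty\}\cup\{|\cdot|_\mfp\}_\mfp$.

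The key identity is that for nonzero $x\in A$ the factorization $xA=\prod_\mfp\mfp^{v_\mfp(x)}$ gives $N_A(x)=\prod_\mfp q_\mfp^{v_\mfp(x)}$ (exactly as in the proof of Lemma~\ref{lem:N(alphaB)-N(N(alpha)A)}), so that $|x|_\infty\prod_\mfp|x|_\mfp=N_A(x)\cdot N_A(x)^{-1}=1$; extending multiplicatively to $K^\times$ yields the product formula, which is condition~(1) of Theorem~\ref{thm:Artin--Whaples}. I would then verify the remaining hypotheses: each $|\cdot|_\mfp$ is nontrivial at a uniformizer, and $|\cdot|_\infty$ is nontrivial because $A$ is not a field; the $|\cdot|_\mfp$ are mutually inequivalent (distinct primes are separated by uniformizers), and $|\cdot|_\infty$ is inequivalent to every $|\cdot|_\mfp$ since $|\cdot|_\infty\ge 1$ on $A$ while each $|\cdot|_\mfp\le 1$ on $A$; and condition~(2) holds because any $|\cdot|_\mfp$ is discrete with finite residue field. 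Thus $K$ is a global field, and the dichotomy in the comment after Theorem~\ref{thm:Artin--Whaples} corresponds precisely to whether $|\cdot|_\infty=N_A$ is Archimedean ($\operatorname{char}K=0$, number field) or non-Archimedean (function field).

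For the second assertion I would first record that $A=\{x\in K:|x|_\mfp\le 1\text{ for all }\mfp\}$, since a Dedekind domain is the intersection of its localizations. Let $\mathcal I$ denote the set of places of $K$ not among the $|\cdot|_\mfp$ (the Archimedean places together with any ``extra'' finite places that have been inverted). Comparing our product formula over $\Sigma$ with the product formula over all places of $K$ gives $N_A(x)=\prod_{v\in\mathcal I}|x|_v$, so $A$ is exactly the ring of elements of $K$ integral at every place outside $\mathcal I$. Consequently $A$ is a ring of integers precisely when $\mathcal I$ is a single place: in the number field case $\mathcal I=\{\text{the unique Archimedean place}\}$ forces $A=\{x:|x|_w\le 1\text{ for all finite }w\}$, the integral closure of $\Z$; in the function field case $\mathcal I=\{\infty\}$ forces $A$ to be the elements with poles only at $\infty$, which is the integral closure of $\F_{q}[t]$ for any nonconstant $t\in A$ (whose poles then lie only at $\infty$).

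The main obstacle is this last point: that the quasi-triangle inequality forces $|\mathcal I|=1$. Suppose instead $|\mathcal I|\ge 2$. By Dirichlet's unit theorem (valid for the $S$-integers of any global field) $A^\times$ has rank $|\mathcal I|-1\ge 1$, and since the image of $A^\times$ under $(\log|\cdot|_v)_{v\in\mathcal I}$ is a full lattice in the trace-zero hyperplane, I may choose $u\in A^\times$ of infinite order with $|u|_v\ne 1$ for every $v\in\mathcal I$. Then $N_A(u^r)=1$ for all $r$, whereas, generalizing the $\Z[\sqrt2]$ computation following Definition~\ref{def:Basic PID}, $N_A(1+u^r)=\prod_{v\in\mathcal I}|1+u^r|_v\asymp\prod_{v:\,|u|_v>1}|u|_v^{\,r}\to\infty$ (the Archimedean factors up to a bounded constant, the non-Archimedean ones exactly, by the ultrametric inequality since $|u^r|_v\ne 1$; the exponent $\sum_{|u|_v>1}\log|u|_v$ is strictly positive as $\sum_{v\in\mathcal I}\log|u|_v=0$). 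This contradicts $N_A(1+u^r)\le C\bigl(N_A(1)+N_A(u^r)\bigr)=2C$. Hence $|\mathcal I|=1$, and $A$ is the integral closure of $\Z$ (respectively of $\F_{q}[t]$), as required.
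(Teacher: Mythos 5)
The first half of your proof---extending $N_A$ to an absolute value $|\cdot|_\infty$ on $K$, introducing the $\mathfrak{p}$-adic absolute values, proving the product formula over $\Sigma$, checking inequivalence and the discreteness/finite-residue-field condition, and invoking Theorem~\ref{thm:Artin--Whaples}---is essentially the paper's argument and is correct. The second half, where you must show that $A$ is a ring of integers, diverges from the paper and contains a genuine gap: your deduction that $|\mathcal{I}|=1$ presupposes throughout that $\mathcal{I}$ is \emph{finite}, and this is never proved. Dirichlet's $S$-unit theorem, the statement that $A^\times$ has rank $|\mathcal{I}|-1$, and the claim that the log-image of $A^\times$ is a full lattice in the trace-zero hyperplane all require a finite set of places. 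Worse, if $\mathcal{I}$ is infinite, the unit you need---$u\in A^\times$ with $|u|_v\neq 1$ for \emph{every} $v\in\mathcal{I}$---cannot exist at all, since any element of $K^\times$ has $|u|_v=1$ at all but finitely many places. So your contradiction argument is silent in exactly the case it must exclude. That case is not hypothetical: it is precisely the shape of the paper's own example $\mathbb{Z}_{(p)}$ after Definition~\ref{def:Basic PID}, a finite quotient PID for which $\mathcal{I}$ consists of the Archimedean place together with all primes $q\neq p$. Ruling out such rings (equivalently, showing no prime of $\mathcal{O}_K$ is inverted in $A$) is the substance of the second assertion, so finiteness of $\mathcal{I}$ is essentially what has to be proved and cannot be assumed.

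For comparison, the paper avoids this issue entirely: the results it cites from \cite[Chapter~12]{Artin-Alg_Num_and_Func} assert that a family of absolute values satisfying the product-formula axioms is the \emph{complete} family of places of $K$ (up to equivalence), so $\Sigma$ omits nothing, $\mathcal{I}$ is the single equivalence class of $N_A$, and the identification $A=\bigcap_{\mathfrak{p}}\mathcal{O}_{\mathfrak{p}}=\mathcal{O}_K$ is then completed by elementary valuation theory from \cite{Atiyah-Macdonald}, with no unit theorem and no normalized product formula. If you want to salvage your unit-theoretic route, reverse the logic so that the quasi-triangle inequality is used \emph{before} any appeal to rank counts: fix a prime $\mathfrak{p}=(p)$ of $A$ (one exists since $A$ is not a field); if $A^\times$ were infinite, then since $A/\mathfrak{p}^n$ is finite, for each $n$ there would be a unit $v\neq 1$ with $v\equiv 1\pmod{\mathfrak{p}^n}$, giving $N_A(v)=N_A(-1)=1$ while $N_A(v-1)\geq|A/\mathfrak{p}^n|\geq 2^n$, contradicting the quasi-triangle inequality (this generalizes the paper's $\mathbb{Z}_{(p)}$ computation, not the $\mathbb{Z}[\sqrt{2}]$ one). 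Hence $A^\times$ is finite. But if $|\mathcal{I}|\geq 2$, choose a finite subset $S\subseteq\mathcal{I}$ with $|S|\geq 2$ containing the Archimedean places; the $S$-unit group of $K$ is contained in $A^\times$ and is infinite by Dirichlet's theorem---a contradiction that works whether $\mathcal{I}$ is finite or not. With $|\mathcal{I}|=1$ established, your concluding identification of $A$ with $\mathcal{O}_K$, respectively with the integral closure of $\mathbb{F}_q[t]$, goes through.
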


\begin{proof}
The ideal norm $N_{A}$ extends to $K$ via $N_{A}(a/b)=\frac{N_{A}(a)}{N_{A}(b)}$,
for $a,b\in A$ and it is immediately checked that $N_{A}$ on $K$
satisfies the quasi-triangle inequality. Thus $N_{A}$ on $K$ is
an absolute value. We also have $\mfp$-adic absolute values for every
nonzero prime ideal $\mfp$ of $A$. Indeed, for $a\in A$, let $v_{\mfp}(a)$
denote the largest integer $n$ such that $\mfp^{n}$ divides the
ideal $aA$, and define

\[
|a|_{\mfp}=|A/\mfp|^{-v_{\mfp}(a)}.
\]
Just like $N_{A}$, the function $|\cdot|_{\mfp}:a\mapsto|a|_{\mfp}$
extends to $K$ via $|a/b|_{\mfp}=\frac{|a|_{\mfp}}{|b|_{\mfp}}$,
and this defines an absolute value on $K$. Note that $N_{A}$ is
not equivalent to any of the absolute values $|\cdot|_{\mfp}$ because
if $p\in A$ is a generator of a prime ideal $\mfp$, we have $|p|_{\mfp}=|A/\mfp|^{-1}<1$,
while $N_{A}(p)=|A/\mfp|>1$.

We will now verify that $K$ together with the absolute values $N_{A}$
and $|\cdot|_{\mfp}$, where $\mfp$ runs through the prime ideals
of $A$, satisfies the conditions of Theorem~\ref{thm:Artin--Whaples}. 

Condition~1: Since $A$ is not a field (by the definition of basic
PID), it has a nonzero proper ideal, so the ideal norm $N_{A}$ is
not the trivial absolute value on $K$. For any nonzero $a,b\in A$,
there are only finitely many prime elements of $A$ that divide $a$
or $b$, so $|a/b|_{\mfp}=1$ for all but finitely many $\mfp$. Set
$|x|_{\infty}:=N_{A}(x)$ for $x\in K$ and let $\Sigma_{f}=\{\mfp\mid\mfp\neq(0)\text{ prime ideal of }A\}$
and $\Sigma=\Sigma_{f}\cup\{\infty\}$. Note that $\Sigma_{f}$ is
nonempty since $A$ is not a field. For a nonzero $a\in A$, let $aA=\mfp_{1}^{e_{1}}\cdots\mfp_{r}^{e_{r}}$
be the prime ideal factorization, where $e_{i}=v_{\mfp_{i}}(a)$.
Then 
\begin{align*}
\prod_{i\in\Sigma}|a|_{i} & =|\mfp_{1}^{e_{1}}|_{\mfp_{1}}\cdots|\mfp_{r}^{e_{r}}|_{\mfp_{r}}\cdot|a|_{\infty}=|A/\mfp_{1}|^{-e_{1}}\cdots|A/\mfp_{r}|^{-e_{r}}\cdot|A/aA|\\
 & =|A/\mfp_{1}|^{-e_{1}}\cdots|A/\mfp_{r}|^{-e_{r}}\cdot|A/\mfp_{1}|^{e_{1}}\cdots|A/\mfp_{r}|^{e_{r}}=1,
\end{align*}
where for the penultimate equality we have used the Chinese remainder
theorem and the fact that $|A/\mfp^{n}|=|A/\mfp|^{n}$, for any prime
$\mfp$ and $n\in\N$ (see \cite[Lemma~V.3.4]{Lorenzini}). Thus also
$\prod_{i}|a/b|_{i}=1$ for any nonzero $a,b\in A$.

Condition~2: Let $\mfp\in\Sigma_{f}$. Then $|\cdot|_{\mfp}$ is
discrete since its values are of the form $|A/\mfp|^{n}$, $n\in\Z$.
Moreover, the valuation ring $\mathcal{O}_{\mfp}$ contains $A$,
so by \cite[Theorem~2.3]{Levitz-Mott} $\mathcal{O}_{\mfp}$ is a
finite quotient domain. In particular, $\bar{k}_{\mfp}=\mathcal{O}_{\mfp}/\mfp$
is finite. Thus Theorem~\cite{Artin-Whaples} implies that $K$ is
a global field. 

By \cite[Chapter~12, Corollary~1 and Theorem~4]{Artin-Alg_Num_and_Func}
the set $\{|\cdot|_{v}\mid v\in\Sigma\}$ consists of all the nontrivial
absolute values on $K$ (up to equivalence). Since $x\in K$ lies
in $A$ if and only if $v_{\mfp}(x)\geq0$ for all $\mfp\in\Sigma_{f}$,
we have
\begin{equation}
A=\{x\in K\mid|x|_{\mfp}\leq1\text{ for all }\mfp\in\Sigma_{f}\}=\bigcap_{\mfp\in\Sigma_{f}}\cO_{\mfp}.\label{eq:A-is-val-ring}
\end{equation}
If $K$ is a number field, let $\cO_{K}$ be its ring of integers.
If $K$ is a function field, we define $\cO_{K}$ as follows. As noted
just before Theorem~\ref{thm:Artin--Whaples}, $k_{0}$ is a subfield
of any $\bar{k}_{\mfp}$, so $k_{0}$ is finite. Moreover, (\ref{eq:A-is-val-ring})
implies that $k_{0}\subset A$. Let $t\in A$ be an element such that
$t\not\in k$ (such an element exists since $A$ is not a field);
then $N_{A}(t)>1$ (otherwise $|x|_{\mfp}\leq1$ for all $\mfp\in\Sigma$,
hence $t\in k_{0}$). By the proof of \cite[Chapter~12, Theorem~3]{Artin-Alg_Num_and_Func},
$K$ is a finite extension of the field of fractions $k_{0}(t)$ of
$k_{0}[t]$. In this case, let $\cO_{K}$ denote the integral closure
of $k_{0}[t]$ in $K$.

It remains to show that in either case we have $A=\cO_{K}$. Let $A_{0}=\Z$
in case $K$ is a number field and let $A_{0}=k_{0}[t]$ otherwise.
By \cite[Corollary~5.22]{Atiyah-Macdonald}, $\cO_{K}$ is the intersection
of all the valuation rings of $K$ containing $A_{0}$, where a valuation
ring $R$ of $K$ is an integral domain with field of fractions $K$
such that $x\in K$ implies $x\in R$ or $x^{-1}\in R$. It is clear
that every $\cO_{\mfp}$ is a valuation ring of $K$ containing $A_{0}$,
so that by (\ref{eq:A-is-val-ring}) we have $\cO_{K}\subseteq A$.
Conversely, we claim that every valuation ring $R$ of $K$ containing
$A_{0}$ equals some $\cO_{\mfp}$. Indeed, let $R$ be a valuation
ring of $K$ containing $A_{0}$. Then $R$ is integrally closed \cite[Proposition~5.18]{Atiyah-Macdonald},
so $\cO_{K}\subseteq R$. If $\mfm$ is the maximal ideal of $R$,
then $\mfq:=\cO_{K}\cap\mfm$ is a prime ideal of $\cO_{K}$, and
as $R$ is a local ring \cite[Proposition~5.18]{Atiyah-Macdonald},
we have $\cO_{K,\mfq}\subseteq R$, where $\cO_{K,\mfq}$ is the localization
of $\cO_{K}$ at the prime ideal $\mfq=\cO_{K}\cap\mfp$. Since $\cO_{K}$
is a Dedekind domain, $\cO_{K,\mfq}$ is a discrete valuation ring,
hence a valuation ring, so by \cite[Theorem~5.21]{Atiyah-Macdonald}
we must have $\cO_{K,\mfq}=R$. Now, since $A$ is a PID it is integrally
closed (and $A$ contains $A_{0}$), so we must have $\cO_{K}\subseteq A$.
Let $\mfp$ be a prime ideal of $A$ such that $\mfq=\cO_{K}\cap\mfp$
(i.e., $\mfp$ can be any prime ideal dividing the ideal $\mfq A$).
Then $\cO_{K}\subseteq A\subseteq\cO_{\mfp}$, so $\cO_{K,\mfq}\subseteq\cO_{\mfp}$
and by \cite[Theorem~5.21]{Atiyah-Macdonald} $\cO_{K,\mfq}=\cO_{\mfp}$
and thus $R=\cO_{\mfp}$. It thus follows from (\ref{eq:A-is-val-ring})
and \cite[Corollary~5.22]{Atiyah-Macdonald} that $A=\cO_{K}$.
\end{proof}
Let $B$ be a ring of class (G) over the basic PID $A$, let $K$
be the fraction field of $A$, and let $L$ be the fraction field
of $B$. With this notation, we have the following result.
\begin{lem}
\label{lem:Field-extension}The field extension $L/K$ is of finite
degree and $B$ is the integral closure of $A$ in $L$. On the other
hand, let $L'/K$ be a finite separable extension and let $B'$ be
the integral closure of $A$ in $L'$. Then $B'$ is a ring of class
(G) over $A$.
\end{lem}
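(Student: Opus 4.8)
The statement has two halves, and the plan is to treat them separately. For the first half, recall that $B$ is free of some finite rank $n$ over $A$, say with $A$-basis $x_1,\dots,x_n$. First I would note that every element of $B$ is integral over $A$: for $b\in B$ the multiplication map sends each $x_i$ to an $A$-linear combination of the $x_j$, so by the determinant trick (Cayley--Hamilton applied to the faithful finitely generated module $B$) the element $b$ satisfies a monic polynomial over $A$. Next I would show $[L:K]=n$ by considering the $K$-span $V=Kx_1+\cdots+Kx_n\subseteq L$. Clearing denominators shows that the $x_i$, being $A$-linearly independent, are $K$-linearly independent, so $\dim_K V=n$. Since $x_ix_j\in B\subseteq V$, the subspace $V$ is closed under multiplication, hence is a finite-dimensional $K$-subalgebra of $L$ that is a domain; such an algebra is a field, and as $B\subseteq V\subseteq L=\operatorname{Frac}(B)$ this forces $V=L$. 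Thus $L/K$ is finite of degree $n$. Finally, $B$ is integral over $A$ and, being a Dedekind domain, is integrally closed in $L$; any $\alpha\in L$ integral over $A$ is a fortiori integral over $B$, hence lies in $B$, so $B$ is exactly the integral closure of $A$ in $L$.

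For the second half, the plan is the classical trace-form argument, which is where separability is essential. First I would observe that $B'$ is integral over $A$ by construction, and that one can pick a $K$-basis $e_1,\dots,e_n$ of $L'$ with each $e_i\in B'$ (any element of $L'$ becomes integral after multiplying by a suitable nonzero element of $A$ to clear denominators in its minimal equation over $K$). Since $L'/K$ is separable, the trace pairing $(x,y)\mapsto \mathrm{Tr}_{L'/K}(xy)$ is nondegenerate; let $e_1^\ast,\dots,e_n^\ast$ be the dual basis. Because $\mathrm{Tr}_{L'/K}(B')\subseteq A$ (traces of integral elements are integral and $A$ is integrally closed), every $\beta\in B'$ expands in the $e_i^\ast$ with coefficients $\mathrm{Tr}_{L'/K}(\beta e_i)\in A$, which gives the inclusion $B'\subseteq Ae_1^\ast+\cdots+Ae_n^\ast$. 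Thus $B'$ is a submodule of a free $A$-module of rank $n$; since $A$ is Noetherian, $B'$ is finitely generated over $A$, and since it is torsion-free and $A$ is a PID, it is free. Its rank is exactly $n$ because it also contains the rank-$n$ free module $Ae_1+\cdots+Ae_n$.

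It remains to check that $B'$ is a Dedekind domain. Here I would argue that $B'$ is integrally closed in $L'$ by construction, Noetherian because it is a finitely generated module over the Noetherian ring $A$, and of Krull dimension one because it is integral over the one-dimensional domain $A$ (using lying-over and incomparability). A Noetherian integrally closed domain of dimension one is Dedekind, so $B'$ is a ring of class (G) over $A$. I expect the main obstacle to be the finite generation of $B'$ as an $A$-module: this is precisely the step that uses separability of $L'/K$, via nondegeneracy of the trace form, and it is the reason the separability hypothesis cannot be dropped.
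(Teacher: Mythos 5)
Your proof is correct and follows essentially the same route as the paper: your span $V=Kx_{1}+\cdots+Kx_{n}$ is precisely the paper's localization $S^{-1}B$ (finite-dimensional domain over $K$, hence a field, hence equal to $L$), and your integral-closure argument for $B$ is identical. The only difference is that where the paper simply cites standard results for the second half (that the integral closure $B'$ in a finite separable extension is a Dedekind domain, finitely generated as an $A$-module), you reprove them via the determinant trick and the trace-form argument, which is exactly how those cited results are classically established.
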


\begin{proof}
Let $S=A\setminus\{0\}$. Then (by a simple argument) $S^{-1}B$ is
finitely generated as a vector space over $S^{-1}A=K$. Thus $S^{-1}B$
is an integral domain that is a finite-dimensional vector space, so
$S^{-1}B$ is a field, that is, $S^{-1}B=L$. Hence $L/K$ is finite.
Furthermore, since $B$ is finitely generated over $A$, $B$ is integral
over $A$ (see, e.g., \cite[Proposition~I.2.10]{Lorenzini}). Thus
$B$ lies inside the integral closure $C$ of $A$ in $L$. Since
$B\subseteq C\subseteq L$, the fraction field of $C$ is $L$. Any
$x\in C$ is integral over $A$, hence integral over $B$. Since $B$
is a Dedekind domain it is integrally closed, so $x\in B$. Thus $C=B$,
that is, $B$ is the integral closure of $A$ in $L$.

Moreover, it is well known that $B'$ is a Dedekind domain that is
finitely generated over $A$ (see, e.g., \cite[Theorem~I.4.7 and Theorem~I.6.2]{Lorenzini}).
Since $B'$ is torsion-free, it is free over $A$ and thus $B'$ is
a ring of class (G) over $A$.
\end{proof}
\begin{cor}
\label{cor:class-(G)-is-global-rings}Let $A$ be a finite quotient
PID such that its ideal norm $N_{A}$ satisfies the quasi-triangle
inequality. Let $B$ be a Dedekind domain which is a finitely generated
and free $A$-module. Then $B$ is a ring of integers of a global
field. In particular, if $A$ is a basic PID and $B$ is of class
(G) over $A$, then $B$ is a ring of integers of a global field.
\end{cor}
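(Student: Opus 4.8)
The plan is to combine Theorem~\ref{thm:BasicPIDs-are-rings-of-integers} with the first half of Lemma~\ref{lem:Field-extension} and transitivity of integral closure. The second assertion is immediate from the first, since a basic PID is in particular a finite quotient PID whose ideal norm satisfies the quasi-triangle inequality, and a ring of class (G) is by definition a Dedekind domain that is finitely generated and free over $A$; so I will only address the first assertion.

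First I would apply Theorem~\ref{thm:BasicPIDs-are-rings-of-integers} to conclude that $K$ is a global field and that $A$ is a ring of integers of $K$; in particular $A$ is the integral closure in $K$ of $A_{0}$, where $A_{0}=\Z$ in the number field case and $A_{0}=\F_q[t]$ (for a suitable transcendental $t\in A$) in the function field case. Next, since $B$ is a Dedekind domain that is finitely generated over the domain $A$, the argument in the first part of Lemma~\ref{lem:Field-extension} (which uses neither the ``enough small norm'' axiom nor the quasi-triangle inequality) shows that $L/K$ is finite and that $B$ is the integral closure of $A$ in $L$. By transitivity of integral closure, $B$ is then the integral closure of $A_{0}$ in $L$.

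It remains to identify $L$ as a global field and to recognize $B$ as one of its rings of integers. In the number field case this is immediate: $L/\Q$ is finite, so $L$ is a number field, and $B$ is the integral closure of $\Z$ in $L$, i.e.\ $\cO_{L}$. In the function field case, $L$ is a finitely generated extension of $\F_q$ of transcendence degree one (as $L/\F_q(t)$ is finite), and its field of constants $\F'$, the algebraic closure of $\F_q$ in $L$, is finite, since $[\F':\F_q]=[\F'(t):\F_q(t)]\le[L:\F_q(t)]<\infty$. Because $\F_q$, hence $\F'$, is perfect, $L$ is separably generated over $\F'$: there is an $s\in L$, transcendental over $\F'$, with $L/\F'(s)$ finite separable. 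Thus $L$ is a global function field, and since $t\in L$ is transcendental over $\F_q$ and $B$ is the integral closure of $\F_q[t]$ in $L$, $B$ is by definition a ring of integers of $L$.

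The one genuinely nontrivial point is this last step in the function field case: the extension $L/K$ need not be separable, so $L/\F_q(t)$ need not be separable and $L$ is not automatically presented as a finite separable extension of a rational function field, as the definition of a global function field requires. I expect this to be the main obstacle, and I would resolve it by invoking the separable generation of a one-variable function field over the perfect field $\F'$ (equivalently, the existence of a separating transcendence basis), which produces the required separable presentation of $L$ without altering the ring $B$.
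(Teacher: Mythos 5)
Your proposal is correct and follows essentially the same route as the paper: Theorem~\ref{thm:BasicPIDs-are-rings-of-integers} gives that $K$ is global and $A$ is the integral closure of $A_{0}$ (where $A_{0}$ is $\Z$ or $\F_{q}[t]$), the first half of Lemma~\ref{lem:Field-extension} gives that $L/K$ is finite and $B$ is the integral closure of $A$ in $L$, and transitivity of integrality identifies $B$ with the integral closure of $A_{0}$ in $L$, hence with a ring of integers of $L$. If anything, your treatment of the possibly inseparable function-field case (invoking separable generation of $L$ over its finite, perfect constant field to exhibit $L$ as a finite \emph{separable} extension of a rational function field) is more explicit than the paper, which asserts that $L$ is a global field by citing Lemma~\ref{lem:Field-extension} and leaves this standard point implicit.
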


\begin{proof}
Let $K$ and $L$ be the fraction field of $A$ and $B$, respectively.
By Lemma~\ref{lem:Field-extension} $L$ is a global field and $B$
is the integral closure of $A$ in $L$. By Proposition~\ref{thm:BasicPIDs-are-rings-of-integers},
$A$ is the integral closure in $K$ of $A_{0}$, where $A_{0}$ is
either $\Z$ or $\F_{q}[t]$, for some $t\in K$. Let $C$ be the
integral closure of $A_{0}$ in $L$. Since $A_{0}\subseteq A$ we
trivially have $C\subseteq B$. By the transitivity of integrality
\cite[Proposition~I.2.18]{Lorenzini} applied to $A_{0}\subseteq A\subseteq B$,
we have that $B$ is integral over $A_{0}$, hence $B\subseteq C$
and so $B=C$. We have proved that $B$ is the integral closure of
$\Z$ or $\F_{q}[t]$ in the global field $L$ and thus $B$ is a
ring of integers in $L$.
\end{proof}
One may ask whether there exists a Dedekind domain $B$ that is finitely
generated and free over a PID $A$ with finite quotients and such
that $B$ has infinite class group. Theorem~\ref{thm:Bound-finiteness}
and Corollary~\ref{cor:class-(G)-is-global-rings} show that if such
an example exists, then the quasi-triangle inequality must fail for
ideal norm $N_{A}$. We note that Goldman \cite{Goldman} and Heitmann
\cite{Heitmann} have given examples of Dedekind domains with finite
quotients and infinite class groups, but we do not know whether these
examples are finitely generated and free over some PID.

It is a trivial fact that there exist Dedekind domains (even PIDs)
with finite class groups that are not overrings of any ring of integers
of a global field. Indeed, the polynomial ring $\C[X]$ is a PID but
is not a finite quotient domain, so cannot be an overring of any finite
quotient domain (finite quotient domains are stable under localization).
However, we do not know whether there exists a finite quotient Dedekind
domain with finite class group that is not the overring of any ring
of integers of a global field. 
\begin{acknowledgement*}
I wish to thank Pete L.~Clark and D.~Lorenzini for pointing out
inaccuracies in a previous version and for comments that helped to
improve the exposition.
\end{acknowledgement*}
\bibliographystyle{alex}
\bibliography{alex}

\end{document}